\documentclass{amsart}

\usepackage{graphicx}

\usepackage{amsmath,amssymb}
\usepackage{amsthm}
\usepackage{amsfonts}
\newtheorem{df}{Definition}[section]
\newtheorem{thm}[df]{Theorem}
\newtheorem{lemm}[df]{Lemma}

\newtheorem{rem}[df]{Remark}

\newcommand{\aug}{\textit{aug}}
\newcommand{\id}{\textit{id}}

\begin{document}

\title{The logarithms of Dehn twists on non-orientable surfaces}
\author{Tsuji Shunsuke}
\maketitle

\begin{abstract}
We introduce a Lie algebra associated with a non-orientable surface, which is an analogue for the Goldman Lie algebra of an oriented surface. As an application, we deduce an explicit formula of the Dehn twist along an annulus simple closed curve on the surface as in Kawazumi-Kuno  \cite{Kawazumi} \cite{KK} and Masseyeau-Turaev \cite{MT}.
\end{abstract}
\section{Introduction}

In study of an oriented surfaces, the Goldman Lie algebra
plays an important role. Goldman \cite{Goldman} defined a Lie bracket 
on the free $\mathbb{Z}$-module with basis the set
of conjugacy classes in the fundamental group of the oriented
surface. The bracket corresponds to the Poisson bracket of
smooth functions on the representation space of the fundamental group.
This is called the Goldman Lie bialgebra. 
Turaev \cite{Turaev} found that the Goldman Lie bialgebra has the structure
of a Lie algebra, which is called the Goldman-Turaev Lie algebra.
Furthermore Turaev \cite{Turaev} showed that the skein algebra of links
in the cylinder over an oriented surface quantizes
the Goldman-Turaev Lie bialgebra on the surface.
Gadgil \cite{Gadgil} showed that a homotopy equivalence between compact
oriented surfaces with non-empty boundary is homotopic to a
homeomorphism if and only if it commutes with the
Goldman Lie bracket. Kawazumi and Kuno \cite{Kawazumi} found that Goldman Lie algebra acts on
the group ring of fundamental group and that the action
induces more detailed structures on the Goldman Lie algebra.

On a non-orientable surface the local intersection number can
be defined only over fields whose characteristic is 2.
For example, in Kawazumi-Kuno \cite{Kawazumi} \cite{KK} and Massuyeau-Turaev \cite{MT}
the quantity $\frac{1}{2}(\log (c))^2$ gives the logarithm of the Dehn twist along the simple closed curve $c$ on an oriented surface,
but we can't define it on a non-orientable surface.
In this paper, we introduce a Lie subalgebra of the Goldman Lie algebra on the orientation cover with coefficients in a commutative ring containing the rationals $\mathbb{Q}$ and
the action of the Lie subalgebra on the group ring of fundamental
group of the non-orientable surface. The action can be quantized in the sense of
Turaev \cite{Turaev}. As an application, we deduce an explicit
formula of the Dehn twists along an annulus simple closed curve on the surface as in 
Kawazumi and Kuno \cite{Kawazumi}.
On a non-orientable surface, the annulus simple closed curve Dehn twists generate
the subgroup of the mapping class group
consisting of elements whose determinant of the action on the first homology group is 1, as was proved by Lickorish \cite{Lickorish}.

We conclude the introduction by fixing our notation.
Let $I$ denote denote the unit interval $[0,1]$ as usual.
Let $F$ denote a non-orientable compact connected surface with non-empty boundary, and $K$ a commutative associative ring containing the field of rationals $\mathbb{Q}$.
We define $p:\tilde{F} \to F$ to be the orientation cover of $F$ as shown in Figure \ref{orientation:cover}.
We fix the curve segments $\delta_1,\delta_2 \dots \delta_n ,\delta'_1 ,\delta'_2 \dots \delta'_n $ in $\tilde{F}$ as in Figure \ref{orientation:cover} such that $p(\delta_i)=p(\delta' _i)$ for all $i=1,2 \dots n$. We fix an orientation of the surface $\tilde{F}$. The surface $F^u$ denotes the left connected component of $F \backslash \cup^n_{i=1}(\delta_i \cup \delta'_i)$ and the surface $F^d$ the right in Figure \ref{orientation:cover}. For $x_0 \in F \backslash \cup^n_{i=1} p(\delta_i) $, the fiber $p^{-1}(x_0)$ consists of $x^u_0 \in F^u$ and $x_0^d \in F^d$. We identify $F \backslash \cup^n_{i=1} p(\delta_i) $ and $F^u$ as oriented surfaces.

\begin{figure}
\includegraphics[scale=0.7]{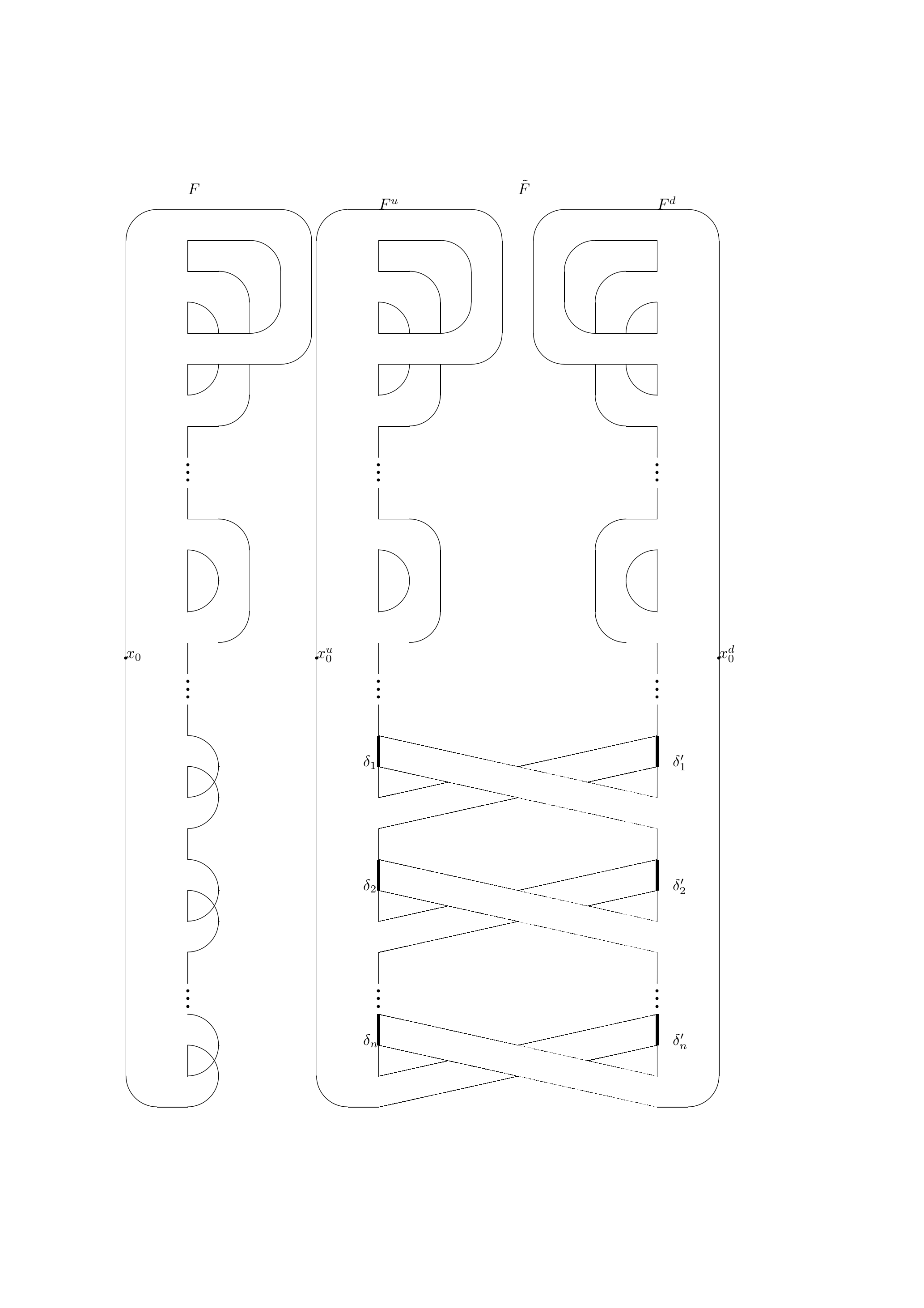}
\caption{The orientation cover} \label{orientation:cover}
\end{figure}

\subsection*{Acknowledgment}
The author thanks his adviser, Nariya Kawazumi, for helphul discussion.

\section{The action on the fundamental group}

Let $\pi = \pi_1(F,x_0)$ be the fundamental group of $F$ with basepoint $x_0$, 
and $\hat{\pi} = \hat{\pi}_1(\tilde{F})$ the set of free homotopy classes of oriented loops in $\tilde{F}$. $K\pi $ denotes the group ring of $\pi$ over $K$, $K \hat{\pi}$ the free $K$-module with basis $\hat{\pi}$. 
Let $\tau:\tilde{F} \to \tilde{F}$ be the unique nontrivial covering transformation of $p$. 
Representatives of $x \in \pi$ and $y \in \hat{\pi}$ are called in general position if $x \cup p(y) \cup (\cup^n_{i=1} p(\delta_i)):I \cup S^1 \cup (\cup_{i=1}^n I_i) \to F$ is an immersion with at worst transverse double points.

\begin{df}[see Kawazumi and Kuno \cite{Kawazumi} Definition 3.2.1.]

For $x \in \pi $ and $y \in \hat{\pi}$ ,we define the action $\tilde{\sigma}(y)(x) =y(x)\in K \pi$ by the following formula, where we choose representatives of $x \in \pi$ and $y \in \hat{\pi}$ in general position,

\begin{multline}
\tilde{\sigma}(y)(x)=y(x) =\frac{1}{2}(\sum_{q \in p(y \cap F^u) \cap x} \varepsilon (q,p(y),x)x_{x_0q}(p(y))_q x_{qx_0} \\
-\sum_{q \in p(y \cap F^d) \cap x} \varepsilon (q,p(y),x)x_{x_0q}(p(y))_q x_{qx_0}).
\label{funca}
\end{multline}

Here $\varepsilon(q,p(y),x)$ is the local intersection number of $p(y)$ and $x$ at $q$ in $F \backslash \cup^n_{i=1} p(\delta_i) $, $(p(y))_q \in \pi_1(F,q)$ is the oriented based loop $p(y)$ based at $q$, $x_{x_0q}$ is the path along $x$ from $q$ to $x_0$, and  $x_{qx_0}$ is the path along $x$ from $x_0$ to $q$.

\end{df}

\begin{lemm}
For any $x \in \pi$ and $y \in \hat{\pi}$,the action $y(x) \in K \pi$ is well-defined.

\end{lemm}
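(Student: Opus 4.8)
\emph{Proof idea.}
The assertion is that the right-hand side of \eqref{funca} does not depend on the chosen general-position representatives of $x\in\pi$ (a loop based at $x_0$) and of $y\in\hat\pi$ (a free loop in $\tilde F$). I would prove this by the standard homotopy-invariance argument. Given two admissible choices, connect the two representatives of $x$ by a homotopy rel $x_0$ in $F$ and the two representatives of $y$ by a free homotopy in $\tilde F$; pushing the latter down by $p$ and combining, one obtains a one-parameter family of pairs of loops $(x_s,p(y_s))$ in $F$, $s\in I$, which after a small perturbation (keeping $x_s$ based at $x_0$ and $y_s$ a free loop in $\tilde F$) may be assumed generic: the triple $(x_s,p(y_s),\bigcup_{i=1}^n p(\delta_i))$ is in general position for all but finitely many $s$ and degenerates in a single standard codimension-one way at each of the finitely many exceptional parameters. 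Between exceptional parameters the finite set $p(y_s)\cap x_s$, the local intersection numbers $\varepsilon$, the classes $x_{x_0q}(p(y_s))_q x_{qx_0}\in\pi$, and the $F^u/F^d$ side-labels of the points $q$ all vary continuously in discrete sets, hence are locally constant; so $y_s(x_s)$ is locally constant there, and it suffices to show \eqref{funca} is unchanged across each exceptional parameter. The degenerations that can matter are: (a) a tangency of $p(y_s)$ with $x_s$, creating or destroying a pair $q^{+},q^{-}\in p(y_s)\cap x_s$; (b) a point $q\in p(y_s)\cap x_s$ sliding transversally past a self-crossing of $p(y_s)$ or of $x_s$, through which $q$ and all the data attached to it vary continuously, so \eqref{funca} does not change; and (c) a point $q\in p(y_s)\cap x_s$ sliding transversally across some $p(\delta_i)$. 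A point $q$ running through the basepoint $x_0$ (which would break continuity of the connecting path $x_{x_0q}$) would force a prescribed crossing point of the family to equal the fixed point $x_0$, i.e.\ two conditions against one parameter, so it does not occur for a generic family; similarly the bigon disk in case (a) may be assumed disjoint from $\bigcup p(\delta_i)$ and from $x_0$.

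Case (c) is the one that uses the precise shape of \eqref{funca} --- the factor $\tfrac12$ and the opposite signs of the two sums. Track $q=q(s)$ and its lift $\tilde q(s)$ on $y_s\subset\tilde F$ through the exceptional value $s_{*}$ at which $q$ meets $p(\delta_i)$; two signs flip simultaneously. First, the side-label of $q$ flips between $u$ and $d$, because $\tilde q(s)$ crosses $\delta_i$ or $\delta'_i$, each of which lies on the common boundary of $F^u$ and $F^d$ in $\tilde F$; thus the term for $q$ moves from the first sum of \eqref{funca} to the second, or back. Second, $\varepsilon(q,p(y_s),x_s)$ changes sign: since $F$ is non-orientable the covering transformation $\tau$ is orientation-reversing, and it interchanges $F^u$ and $F^d$, so each arc $p(\delta_i)$ is orientation-reversing in $F$ --- equivalently, the orientation of $F\setminus\bigcup p(\delta_i)\cong F^u$ with which $\varepsilon$ is computed does not extend across $p(\delta_i)$. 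Meanwhile $x_{x_0q}(p(y_s))_q x_{qx_0}$ varies continuously across $s_{*}$. Hence the contribution of $q$ changes from $\pm\tfrac12\,\varepsilon\cdot(\text{that class})$ to $\mp\tfrac12\,(-\varepsilon)\cdot(\text{that class})$, which is the same; so \eqref{funca} is invariant across $s_{*}$.

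Case (a) is the usual Reidemeister~II cancellation. The points $q^{+}$ and $q^{-}$ are joined by a short sub-arc $\alpha$ of $x_s$ and a short sub-arc $\beta$ of $p(y_s)$ cobounding an embedded disk $D$ in $F$, which by the genericity above is disjoint from $\bigcup p(\delta_i)$ and from $x_0$. Then $q^{+}$ and $q^{-}$ carry the same side-label, because the lift of $\beta$ lies entirely in $F^u$ or entirely in $F^d$ (as $D$ misses $\bigcup p(\delta_i)$); their local intersection numbers are opposite, $\varepsilon(q^{+},p(y_s),x_s)=-\varepsilon(q^{-},p(y_s),x_s)$; and $x_{x_0q^{+}}(p(y_s))_{q^{+}}x_{q^{+}x_0}=x_{x_0q^{-}}(p(y_s))_{q^{-}}x_{q^{-}x_0}$ in $\pi$, since the connecting paths differ by $\alpha$ and the based loops differ by conjugation along $\beta$, while $\alpha\simeq\beta^{\pm1}$ rel endpoints because they cobound $D$. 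So the two new terms enter the same sum of \eqref{funca} and add up to $\bigl(\varepsilon(q^{+},p(y_s),x_s)+\varepsilon(q^{-},p(y_s),x_s)\bigr)x_{x_0q^{+}}(p(y_s))_{q^{+}}x_{q^{+}x_0}=0$; hence \eqref{funca} is unchanged across (a) as well, completing the argument. I expect the delicate point to be the orientation bookkeeping in case (c), checking that the side-label flip and the flip of $\varepsilon$ genuinely cancel against the $\pm\tfrac12$; the other cases are close to Kawazumi and Kuno \cite{Kawazumi}.
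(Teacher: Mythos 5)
Your route is genuinely different from the paper's. The paper does not run a direct homotopy-invariance argument at all: it deduces well-definedness from Lemma~\ref{lift}, which identifies $y(r)$ with $p(\sigma(\theta(y))(\tilde r))$, where $\sigma$ is the Kawazumi--Kuno action on the oriented cover $\tilde F$ and $\tilde r$ is the lift of $r$, so everything is inherited from the orientable case in \cite{Kawazumi}. Your direct argument is more self-contained, and your case (c) correctly isolates the one point where non-orientability enters: when an intersection point crosses $p(\delta_i)$, the $u/d$ label and the reference orientation of $F\setminus\bigcup_i p(\delta_i)\cong F^u$ flip simultaneously (the latter because $\tau$ is orientation-reversing and exchanges $F^u$ with $F^d$), so the sign change of $\varepsilon$ cancels against the sign change in front of the two sums of \eqref{funca}. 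That is exactly the consistency check that makes the formula work, and your cases (a) and (b) are the standard bigon and triple-point moves.

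There is, however, one genuine gap: the basepoint. Your count (``two conditions against one parameter'') is wrong, because $x_0$ lies on $x_s$ for every $s$ automatically; for an intersection point to sit at $x_0$ it is only required that $p(y_s)$ pass through $x_0$, which is two equations in the two parameters $(t,s)$ of the free homotopy of $y$, and this does occur at isolated values of $s$ when $x_0$ is an interior point. Moreover this degeneration is not harmless: as $p(y_s)$ sweeps across an interior basepoint, the intersection points with the two branches of $x$ at $x_0$ appear or disappear carrying basing paths that limit to the constant path and to the full loop $x$ respectively, with local intersection signs that do not cancel, so the sum jumps by a nonzero multiple of $a-xax^{-1}$ (with $a$ the loop $p(y_s)$ based at $x_0$). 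Hence \eqref{funca} would genuinely fail to be well-defined for an interior basepoint, and no dimension count can rescue that step. The correct fix, implicit in the paper's reliance on \cite{Kawazumi} and \cite{KK}, is that $x_0$ is taken on $\partial F$: then representatives of $y$ and their free homotopies can be kept in the interior of $\tilde F$, the degeneration never occurs, and the rest of your argument goes through.
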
 

This lemma is proved in Lemma \ref{lift}. We can define the action $\tilde{\sigma}$ by Lemma \ref{lift} but here we take this geometrical definition.

\begin{rem}
The action $\tilde{\sigma}$ can be quantized in the  sense of Turaev \cite{Turaev}. We define the homeomorphism $\rho : I \to I$ by $ \rho(t)=1-t$. We define the oriented 3-manifold $E$ by the quotient space of $\tilde{F} \times I$ under the equivalent relation, $(\tilde{x},t) \sim (\tilde{x}',t')$ if and only if $ \tilde{x}=\tilde{x}', t=t'$ or $\tilde{x}=\tau(\tilde{x}') , t=\rho(t')$. Let $\xi$ be the quotient map $\tilde{F} \times I \to E$. The continuous map $q :E \to F, (\tilde{x},t) \mapsto p(\tilde{x})$ is the $I$-bundle map. We define the submanifold $\tilde{E} \subset E$ by $\xi(\tilde{F} \times ([0,1/3] \cup[2/3,1])$. The continuous map $\tilde{q}:\tilde{E} \to \tilde{F},(\tilde{x},t) \mapsto \tilde{x}$ for $t \in [2/3,1],  (\tilde{x},t) \mapsto \tau(\tilde{x})$ is the trivial $[0,1/3]$-bundle. We denote the Turaev skein algebra of oriented links in $\tilde{E}$ by $\mathcal{A}(\tilde{E})$ and the Turaev skein module of oriented tangles from $\xi(x^u_0,0)$ to $\xi(x^u_0,1)$ in $E$ by $\mathcal{B}(E,x_0)$. For details, see Turaev \cite{Turaev}. The action $\tilde{\sigma}$ on $K\pi$ of $K\hat{\pi}$ can be quantized by the action on  $\mathcal{B}(E,x_0)$ of  $\mathcal{A}(\tilde{E})$ in the sense of Turaev \cite{Turaev}.                                       
\end{rem}

We extend the action $\tilde{\sigma}$ by linearity to a bilinear map $K \hat{\pi} \times K \pi \to K \pi $. Let $[\ \ ,\ \ ]$ be the Lie bracket in the Goldman Lie algebra $K\hat{\pi}$. We remark that the group ring $K \pi$ is not a $K \hat{\pi}$-module with the action $\tilde{\sigma}$, see Lemma \ref{lift} and Theorem \ref{sayou}. Here $\theta$ denotes the map $\frac{1}{2}(\id -\tau ):K \hat{\pi} \to K \hat{\pi}$.

For $y,y_1,y_2 \in K \hat{\pi}$ and $x,x_1,x_2 \in K \pi $, it is easy to show the following

\begin{align*}
&y(x_1 \cdot x_2)=y(x_1)x_2+x_1y(x_2), \\
&y(x)= - \tau (y)(x), \\
&\tau([y_1,y_2] )=-[\tau(y_1),\tau(y_2)],\\
&\theta^2(y) =\theta(y) , \\
&[\theta(y_1),\theta(y_2)]=\theta([y_1,\theta(y_2)])=\theta([\theta(y_1),y_2])=\theta([\theta(y_1),\theta(y_2)]).
\end{align*}

The following lemma is easy to prove but is essential.
\begin{lemm}
\label{lift}
For $r \in \pi $, let $\tilde{r}$ be the lift of $r$ to $\tilde{F}$ starting at $x_0^u$.
It is satisfies that $\tilde{r} \in \pi_1(\tilde{F},x_0^u)$ or $\tilde{r} \in \pi_1(\tilde{F},x_0^u,x_0^d)$. For $y \in \hat{\pi}$, we have

\begin{equation}\notag
y(r)= \tilde{\sigma}(y)(r) =p(\sigma(\theta(y))(\tilde{r})),
\end{equation}

where the action $\sigma$ is defined in Kawazumi and Kuno \cite{Kawazumi} Definition 3.2.1.

\end{lemm}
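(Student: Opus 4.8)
The statement splits into two parts. The first is immediate from covering-space theory: by unique path lifting, the lift $\tilde r$ of a loop $r$ based at $x_0$ starts at $x_0^u$ and ends at a point of the fibre $p^{-1}(x_0)=\{x_0^u,x_0^d\}$, so $\tilde r$ represents a class in $\pi_1(\tilde F,x_0^u)$ when $[r]$ lies in the orientation-preserving index-two subgroup $p_\ast\pi_1(\tilde F,x_0^u)\subset\pi$ and a path class in $\pi_1(\tilde F,x_0^u,x_0^d)$ otherwise; I would dispose of this in one sentence. For the identity, the plan is a term-by-term comparison of the defining sum \eqref{funca} with the Kawazumi--Kuno sum for $\sigma$ transported to $F$ by $p$. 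Using $K$-linearity of $\sigma$ in its first slot together with $\theta=\tfrac12(\id-\tau)$, I would first rewrite $p\bigl(\sigma(\theta(y))(\tilde r)\bigr)=\tfrac12\,p\bigl(\sigma(y)(\tilde r)\bigr)-\tfrac12\,p\bigl(\sigma(\tau(y))(\tilde r)\bigr)$, and then fix representatives of $r$ and $y$ in general position, i.e. so that $r\cup p(y)\cup(\bigcup_i p(\delta_i))$ is an immersion with only transverse double points; the lift $\tilde r$, together with $y$ and $\tau(y)$, is then automatically in general position in $\tilde F$, since $p$ is a local diffeomorphism (a triple point or a tangency upstairs would descend to a forbidden configuration downstairs).

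Every intersection point $q'\in p(y)\cap r$ is a transverse double point lying off $\bigcup_i p(\delta_i)$, so $r$ runs through $q'$ exactly once and $p(y)$ runs through $q'$ exactly once; consequently $\tilde r$ meets $p^{-1}(q')=\{q'_u,q'_d\}$ in exactly one point, determining a sheet $s_{\tilde r}(q')\in\{u,d\}$, and the unique branch of $y$ over $q'$ lies in $F^u$ or in $F^d$, determining a sheet $s_y(q')\in\{u,d\}$ with the branch of $\tau(y)$ over $q'$ lying in the opposite component. The condition $s_y(q')=u$ is precisely $q'\in p(y\cap F^u)$ and $s_y(q')=d$ is $q'\in p(y\cap F^d)$; moreover, since $\tau(y)\cap F^u=\tau(y\cap F^d)$, the map $q\mapsto q'=p(q)$ gives bijections $y\cap\tilde r\cap F^u\leftrightarrow\{q':(s_y,s_{\tilde r})=(u,u)\}$, $\;y\cap\tilde r\cap F^d\leftrightarrow\{(s_y,s_{\tilde r})=(d,d)\}$, $\;\tau(y)\cap\tilde r\cap F^u\leftrightarrow\{(s_y,s_{\tilde r})=(d,u)\}$, and $\tau(y)\cap\tilde r\cap F^d\leftrightarrow\{(s_y,s_{\tilde r})=(u,d)\}$.

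It then remains to compare the summands attached to matched points $q\leftrightarrow q'$. The group-ring term attached to $q$ in the Kawazumi--Kuno sum is (an initial arc of $\tilde r$ from $x_0^u$ to $q$)$\cdot$($y$ or $\tau(y)$ based at $q$)$\cdot$(the terminal arc of $\tilde r$ from $q$ to the endpoint of $\tilde r$); since $p\circ\tilde r=r$ as parametrised paths and $p\bigl((\tau(y))_q\bigr)=p(y_q)=(p(y))_{q'}$, this term maps under $p$ to $r_{x_0q'}(p(y))_{q'}r_{q'x_0}$, the same element whether it came from the $y$-sum or the $\tau(y)$-sum, and this step carries no sign (it is just functoriality of $p$ under concatenation of paths). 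The signs are carried solely by the local intersection numbers: the chosen orientation of $\tilde F$ makes $p|_{F^u}$ orientation-preserving — this is the identification $F\setminus\bigcup_i p(\delta_i)=F^u$ — and hence, $\tau$ being orientation-reversing, makes $p|_{F^d}$ orientation-reversing, so that the local intersection number of $y$ (or $\tau(y)$) with $\tilde r$ at $q$ equals $\varepsilon(q';p(y),r)$ if $q\in F^u$ and $-\varepsilon(q';p(y),r)$ if $q\in F^d$. Substituting the four families above into $\tfrac12\,p(\sigma(y)(\tilde r))-\tfrac12\,p(\sigma(\tau(y))(\tilde r))$ and collecting by the $y$-sheet, the $s_{\tilde r}=u$ and $s_{\tilde r}=d$ contributions merge and the signs arrange into $\tfrac12\bigl(\sum_{s_y(q')=u}\varepsilon(q';p(y),r)\,r_{x_0q'}(p(y))_{q'}r_{q'x_0}-\sum_{s_y(q')=d}\varepsilon(q';p(y),r)\,r_{x_0q'}(p(y))_{q'}r_{q'x_0}\bigr)$, which is exactly \eqref{funca}. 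Since this holds for every admissible choice of representatives while the right-hand side is independent of that choice by the well-definedness of $\sigma$ (Kawazumi--Kuno, whose proof applies verbatim once $\sigma$ is extended from $K\pi_1(\tilde F,x_0^u)$ to the $K$-module spanned by paths $x_0^u\to x_0^d$), the argument simultaneously proves the well-definedness asserted in the preceding lemma and the claimed formula.

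The substance of the proof — and the only point that is not pure formality — is the sheet bookkeeping: that the $u/d$ splitting of $p(y)$ occurring in \eqref{funca} is literally the splitting of $y\cup\tau(y)\subset\tilde F$ by the two components $F^u$ and $F^d$, and that the orientation discrepancy between $F^u$ and $F^d$ is cancelled exactly by the minus sign in $\theta=\tfrac12(\id-\tau)$ together with the minus sign between the two sums in \eqref{funca}. One should also bear in mind that $\tilde r$, $y$ and $\tau(y)$ may cross the cut curves $\bigcup_i(\delta_i\cup\delta_i')$, so that the arcs of $\tilde r$ above really do travel between the two sheets; this is harmless, because $p$ is a local homeomorphism along those curves and the intersection points $q'$ themselves avoid $\bigcup_i p(\delta_i)$. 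I expect no input beyond Kawazumi--Kuno's construction of $\sigma$ and its elementary properties.
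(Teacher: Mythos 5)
The paper offers no proof of this lemma at all --- it is merely declared ``easy to prove but essential'' --- so there is nothing to compare against except the intended computation, and your argument supplies it correctly and completely. Your sheet-by-sheet bookkeeping (the four cases $(s_y,s_{\tilde r})\in\{u,d\}^2$, the orientation reversal of $p$ on $F^d$ cancelling against the sign in $\theta=\tfrac12(\id-\tau)$ and the minus sign in \eqref{funca}, and the transfer of general position through the local diffeomorphism $p$) is exactly the verification the author leaves implicit, and it simultaneously justifies the well-definedness claimed in the preceding lemma, which the paper also defers to this one.
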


The following lemma is proved by Lemma \ref{lift} and \cite{Kawazumi} Proposition 3.2.2.

\begin{lemm}

For $a,b \in \hat{\pi}$ and $r \in \pi$, we have

\begin{equation}\notag
\begin{split}
a(b(r))-b(a(r)) = ([\theta(a).b])(r) 
=([a,\theta(b)])(r)=([\theta(a),\theta(b)])(r).
\end{split}
\end{equation}

\end{lemm}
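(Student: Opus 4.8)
The plan is to move the entire computation up to the orientation cover $\tilde{F}$ by means of Lemma \ref{lift}, and then to use the fact that on an oriented surface the Kawazumi--Kuno action $\sigma$ is genuinely a Lie algebra action of the Goldman Lie algebra on the group ring, i.e. $\sigma(u)\sigma(v)-\sigma(v)\sigma(u)=\sigma([u,v])$ for $u,v\in K\hat{\pi}$, which is \cite{Kawazumi} Proposition 3.2.2 (and holds verbatim on the module $K\pi_1(\tilde{F},x_0^u,x_0^d)$ of based paths as well as on $K\pi_1(\tilde{F},x_0^u)$). The only geometric ingredient needed beyond Lemma \ref{lift} is a compatibility statement for lifts: if $w$ is a $K$-linear combination of based loops at $x_0^u$ and of paths from $x_0^u$ to $x_0^d$ in $\tilde{F}$, then the lift of $p(w)$ starting at $x_0^u$ is $w$ itself, by uniqueness of path lifting. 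Since $\sigma(\theta(y))$ only inserts loops at interior intersection points and so does not move the endpoints $x_0^u,x_0^d$, the element $\sigma(\theta(y))(w)$ is again such a combination, and combining this with Lemma \ref{lift} applied term by term after expanding $p(w)$ into elements of $\pi$ yields, for every $y\in K\hat{\pi}$,
\[
\tilde{\sigma}(y)(p(w))=p\bigl(\sigma(\theta(y))(w)\bigr).
\]

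With this in hand I would apply Lemma \ref{lift} twice. Let $\tilde{r}$ be the lift of $r$ starting at $x_0^u$. Lemma \ref{lift} gives $b(r)=p(\sigma(\theta(b))(\tilde{r}))$, and then the displayed identity with $w=\sigma(\theta(b))(\tilde{r})$ gives $a(b(r))=p\bigl(\sigma(\theta(a))\sigma(\theta(b))(\tilde{r})\bigr)$; symmetrically $b(a(r))=p\bigl(\sigma(\theta(b))\sigma(\theta(a))(\tilde{r})\bigr)$. Subtracting and invoking \cite{Kawazumi} Proposition 3.2.2 produces
\[
a(b(r))-b(a(r))=p\bigl(\sigma([\theta(a),\theta(b)])(\tilde{r})\bigr).
\]

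Finally I would identify this right-hand side with each of the three stated expressions using the algebraic identities already recorded in the excerpt. Those give $\theta([\theta(a),b])=\theta([a,\theta(b)])=\theta([\theta(a),\theta(b)])=[\theta(a),\theta(b)]$, so that for $z$ equal to any of $[\theta(a),b]$, $[a,\theta(b)]$, $[\theta(a),\theta(b)]$ one has $\theta(z)=[\theta(a),\theta(b)]$; then the $K$-bilinear extension of Lemma \ref{lift} gives $z(r)=p(\sigma(\theta(z))(\tilde{r}))=p\bigl(\sigma([\theta(a),\theta(b)])(\tilde{r})\bigr)$, which is exactly the expression above, and all three equalities follow. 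The step I expect to be the main obstacle is the iterated use of Lemma \ref{lift}: one must verify the lift-compatibility identity above carefully in the path-module setting (where $\tilde{r}$ may be a path from $x_0^u$ to $x_0^d$ rather than a loop) and check that $\tilde{\sigma}$ respects it after expansion into group elements; once that bookkeeping is in place, everything else is formal manipulation with $\theta$ and the Goldman bracket.
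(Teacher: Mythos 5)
Your proposal is correct and follows essentially the same route as the paper: lift $r$ to $\tilde{F}$, apply Lemma \ref{lift} twice, invoke \cite{Kawazumi} Proposition 3.2.2 for $\sigma$ on the cover, and then translate back down using $\theta([\theta(a),b])=\theta([a,\theta(b)])=[\theta(a),\theta(b)]$. The only difference is that you make explicit the lift-compatibility statement $\tilde{\sigma}(y)(p(w))=p(\sigma(\theta(y))(w))$ needed for the iterated application of Lemma \ref{lift} to linear combinations of paths, a point the paper's proof passes over silently; that is a welcome clarification rather than a different argument.
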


\begin{proof}Let $\tilde{r}$ be the lift of $r$ to $\tilde{F}$ starting at $x_0^u$.

By Lemma \ref{lift} and \cite{Kawazumi} Proposition 3.2.2, we have
\begin{equation*}
\begin{split}
a(b(r))-b(a(r)) &=p(\sigma(\theta(a))(\sigma(\theta(b))(\tilde{r}))-\sigma(\theta(b))(\sigma(\theta(a))(\tilde{r}))=p(\sigma([\theta(a),\theta(b)])(\tilde{r})) \\
&=p(\sigma(\theta([\theta(a),b]))(\tilde(r)))=([\theta(a).b])(r).
\end{split}
\end{equation*}

\end{proof}

\begin{lemm}

$\theta K \hat{\pi} \subset K \hat{\pi}$ is a Lie subalgebra of $K \hat{\pi}$.

\end{lemm}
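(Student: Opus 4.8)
The plan is to verify the two things that make $\theta K\hat\pi$ a Lie subalgebra of $K\hat\pi$: that it is a $K$-submodule, and that it is closed under the Goldman bracket. The first is immediate, since $\theta = \frac12(\id - \tau)$ is a $K$-linear endomorphism of $K\hat\pi$, so its image $\theta K\hat\pi$ is a $K$-submodule.

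For closure under the bracket I would argue as follows. An arbitrary pair of elements of $\theta K\hat\pi$ can be written as $\theta(y_1)$ and $\theta(y_2)$ for suitable $y_1, y_2 \in K\hat\pi$. By the identity $[\theta(y_1),\theta(y_2)] = \theta([y_1,\theta(y_2)])$ recorded in the list preceding Lemma~\ref{lift}, the bracket $[\theta(y_1),\theta(y_2)]$ is itself of the form $\theta(\,\cdot\,)$, hence lies in $\theta K\hat\pi$. If one prefers not to invoke that identity as a black box, it can be re-derived on the spot from $\theta^2 = \theta$ together with $\tau([y_1,y_2]) = -[\tau(y_1),\tau(y_2)]$: expanding $\theta = \frac12(\id-\tau)$ in $[\theta(y_1),\theta(y_2)]$ yields a $K$-combination of $[y_1,y_2]$, $[\tau(y_1), y_2]$, $[y_1,\tau(y_2)]$ and $[\tau(y_1),\tau(y_2)]$, and since the last term equals $-\tau([y_1,y_2])$, the whole expression is visibly in the image of $\theta$.

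Finally, antisymmetry and the Jacobi identity for the restricted bracket are inherited verbatim from the ambient Goldman Lie algebra $K\hat\pi$, since the bracket on $\theta K\hat\pi$ is just the restriction of the bracket on $K\hat\pi$. This completes the argument. I do not expect any genuine obstacle: the content of the lemma is entirely packaged in the algebraic identities satisfied by the idempotent $\theta$, and the only point to check is that its image absorbs brackets, which the stated identity $[\theta(y_1),\theta(y_2)] = \theta([y_1,\theta(y_2)])$ delivers at once.
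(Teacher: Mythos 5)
Your proof is correct and follows essentially the same route as the paper: the paper's one-line proof invokes the identity $[\theta(a),\theta(b)]=\theta([\theta(a),\theta(b)])$ from the displayed list preceding Lemma~\ref{lift}, while you invoke the equivalent identity $[\theta(y_1),\theta(y_2)]=\theta([y_1,\theta(y_2)])$ from the same chain; either one shows the bracket of two elements of $\theta K\hat\pi$ lies in the image of $\theta$. (Only a minor quibble with your optional re-derivation: after handling $[\tau(y_1),\tau(y_2)]=-\tau([y_1,y_2])$ you still need to pair the two cross terms via $\tau([\tau(y_1),y_2])=-[y_1,\tau(y_2)]$ to see they too lie in the image of $\theta$.)
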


\begin{proof}It suffices to check the equation $[\theta(a),\theta(b)]=\theta([\theta(a),\theta(b)]$.
\end{proof}

\begin{thm}
\label{sayou}
$K \pi$ is $\theta K\hat{\pi}$-module with $\tilde{\sigma}$.
\end{thm}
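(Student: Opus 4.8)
The plan is to deduce the theorem from the lemma just proved together with the elementary relations listed above. Saying that $K\pi$ is a $\theta K\hat\pi$-module with $\tilde\sigma$ amounts, besides the $K$-bilinearity of $\tilde\sigma$ (built into the definition) and the fact that $\theta K\hat\pi$ is a Lie subalgebra (the preceding lemma), to checking the identity
\[
\eta_1(\eta_2(r)) - \eta_2(\eta_1(r)) = [\eta_1,\eta_2](r)
\]
for all $\eta_1,\eta_2 \in \theta K\hat\pi$ and $r \in K\pi$. Since $\tilde\sigma$ is $K$-linear in each variable, the bracket $[\ \,,\ ]$ is $K$-bilinear, and $\theta K\hat\pi$ is $K$-spanned by $\{\theta(a) : a \in \hat\pi\}$, it suffices to prove this when $\eta_1 = \theta(a)$, $\eta_2 = \theta(b)$ with $a,b \in \hat\pi$ and $r \in \pi$.

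First I would record the identity $\tilde\sigma(\theta(y))(x) = \tilde\sigma(y)(x)$, valid for every $y \in K\hat\pi$ and $x \in K\pi$: indeed $\theta(y) = \tfrac12(y - \tau(y))$, and the listed relation $\tilde\sigma(\tau(y))(x) = -\tilde\sigma(y)(x)$ gives $\tilde\sigma(\theta(y))(x) = \tfrac12\bigl(\tilde\sigma(y)(x) + \tilde\sigma(y)(x)\bigr) = \tilde\sigma(y)(x)$. Applying this twice---the outer application being to the element $\tilde\sigma(b)(r) \in K\pi$, where linearity of $\tilde\sigma$ in the $K\pi$-variable is what makes this legitimate---the left-hand side rewrites as
\[
\tilde\sigma(\theta(a))\bigl(\tilde\sigma(\theta(b))(r)\bigr) - \tilde\sigma(\theta(b))\bigl(\tilde\sigma(\theta(a))(r)\bigr) = a(b(r)) - b(a(r)).
\]
On the right-hand side, $[\theta(a),\theta(b)] \in K\hat\pi$, so $[\theta(a),\theta(b)](r)$ is by definition $\tilde\sigma([\theta(a),\theta(b)])(r)$.

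Now the lemma proved just above, giving $a(b(r)) - b(a(r)) = ([\theta(a),\theta(b)])(r)$ for $a,b \in \hat\pi$ and $r \in \pi$, identifies the two sides; extending by $K$-bilinearity over $\eta_1,\eta_2$ and by $K$-linearity over $r$ completes the verification. That $[\eta_1,\eta_2]$ again lies in $\theta K\hat\pi$, so that the action is internal to the subalgebra, is exactly the content of the preceding lemma. I do not expect a genuine obstacle: the whole argument is an assembly of results already in place, and the only subtle point to watch is that the outer operator $\tilde\sigma(\theta(a))$ is applied to a general element of $K\pi$ rather than to an element of $\pi$, which is harmless because $\tilde\sigma(\theta(y)) = \tilde\sigma(y)$ as operators on all of $K\pi$.
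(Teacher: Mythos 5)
Your proof is correct and takes essentially the same route as the paper's: both reduce the module identity to the preceding commutator lemma together with the elementary relations for $\theta$ and $\tau$. The only cosmetic difference is that the paper substitutes $\theta(a),\theta(b)$ into that lemma and then simplifies $[\theta^2(a),\theta^2(b)]$ via $\theta^2=\theta$, whereas you first use $\tilde\sigma(\tau(y))=-\tilde\sigma(y)$ to show $\tilde\sigma(\theta(y))=\tilde\sigma(y)$ and then apply the lemma to $a,b$ directly.
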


\begin{proof}For $a,b \in K\hat{\pi},r \in K\pi$, we have

\begin{equation}\notag
\theta(a)(\theta(b)(r))-\theta(b)(\theta(a)(r))=[\theta^2(a),\theta^2(b)](r)=[\theta(a),\theta(b)](r).
\end{equation}

\end{proof}

\section{Completion}

\subsection{Completion of the Goldman Lie algebra}

The groups $\pi = \pi_1(F,x_0)$ and $\tilde{\pi} =\pi_1(\tilde{F},x_0^u)$ are free groups.
Let $K\tilde{\pi}$ be the group ring of $K\tilde{\pi}$ over $K$. Let $c:K\tilde{\pi} \to K\hat{\pi}$ be the forgetful map of the basepoint $x^u_0$. Then $c$ is surjective.

Let $G$ be a free group of finite rank and $KG$ the group ring of $G$ over $K$. Define a $K$-algebra homomorphism $\aug:KG \to K$ by $g \in G \mapsto 1$.
We define $I^0G=KG$ and $I^jG=(\ker \aug)^j$.

It is well-known that $\cap^{\infty}_{j=0}I^jG=0$. See, for example, Bourbaki \cite{Bourbaki} Exercise 4.6. Furthermore we have $\cap^{\infty}_{j=0}c(I^j \tilde{\pi})=0$ by Kawazumi and Kuno \cite{KK} Corollary 4.3.2. We define the completed group ring $\widehat{KG}=\underleftarrow{\lim}_{i \rightarrow \infty}KG/(I^iG)$ and the completed Goldman Lie algebra $\widehat{K \hat{\pi}}=\underleftarrow{\lim}_{i \rightarrow \infty}K\hat{\pi}/(c(I^i\tilde{\pi})+K 1)$. 

As is proved in \cite{KK} Theorem 4.1.1 and Lemma 2.5, we have $y_i(x_j) \in I^{i+j-2} \pi$  for $y_i \in c(I^i \tilde{\pi})$ and $x_j \in I^j \pi$. Moreover we have $ \tilde{\sigma}(1)=0$. Hence $\widehat{K \hat{\pi}}$ acts $\widehat{K \pi}$ continuously as derivations.

\subsection{Dehn twist on unoriented surfaces}

We orient $S^1 =\mathbb{R}/\mathbb{Z}$ as the quotient of the line $\mathbb{R}$. We orient the annulus $S^1 \times I$ as a product manifold.

Let $t:S^1 \times I \to S^1 \times I$ be the (right handed) Dehn twist of an annulus given by the formula $S^1 \times I \to S^1 \times I,(s,t) \mapsto (s+t,t)$.
An simple closed curve is called an annulus simple closed curve if its tubular neighborhood is homeomorphic to an annulus. Let $A$ be  an oriented tubular neighborhood of an annulus simple closed curve. We define $t_A:F \to F$ by
\begin{equation*}
t_A(p)=\begin{cases}
t(p) &\text{for $p \in A$} \\
p & \text{for $p \in F \backslash A$}.
\end{cases}
\end{equation*}

We define $\log(t_A):\widehat{K\pi} \to \widehat{K \pi}$ by
\begin{equation*}
\log(t_A)(r)=-\sum_{i=1}^{\infty}\frac{1}{i}(1-t_A)^i(r).
\end{equation*}

Annulus simple closed curves can be lifted to the orientation cover. Let $l$ be a simple closed curve in the surface. If $p(l)$ is simple, $p(l)$ is an annulus circle.
\begin{thm}
\label{mainthm} Let $r$ be an element of $\tilde{\pi}$ such that an embeding of $S^1$ on $F$ represents $p(c(r))$. Then $p(c(r))$ is an annulus simple closed curve. We orient the tubular neighborhood $U$ of $c(r)$ as a submanifold of $\tilde{F}$ and the orientation of $U$ induces that on $p(U)$. We call the oriented annulus $p(U)$ of $p(c(r))$ $A$. We define the element $L \in \theta \widehat{K \hat{\pi}}$ such that $L=\theta (c((\log(r))^2))$. Then we have

\begin{equation}
\log(t_A)(\cdot)=\tilde{\sigma}(L)(\cdot):\widehat{K\pi} \to \widehat{K \pi}.
\end{equation}

In other words, if we define $e^{\tilde{\sigma}(L)} =\sum_{k=0}^\infty \frac{1}{k!} (\tilde{\sigma}(L))^k$, then 

\begin{equation*}
t_A=e^{\tilde{\sigma}(L)}:\widehat{K\pi} \to \widehat{K \pi}.
\end{equation*}

\end{thm}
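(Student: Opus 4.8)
The plan is to lift the whole situation to the orientation cover $\tilde{F}$, apply Kawazumi--Kuno's explicit formula for the logarithm of a Dehn twist on the \emph{orientable} surface $\tilde{F}$, and push the result back down to $F$ by means of Lemma \ref{lift}.

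First I would fix the geometry of the lift. Since $\tilde{F}$ is orientable, the embedded representative $C$ of $c(r)$ has an annular tubular neighbourhood; shrinking $A$ (and arranging $x_0 \notin A$) we get $p^{-1}(A) = U \sqcup \tau(U)$, a disjoint union of two annuli, where $U$ is the component containing $C$, the map $p|_U \colon U \to A$ is the orientation-preserving homeomorphism used to orient $U$ in the statement, while $p|_{\tau(U)}\colon \tau(U)\to A$ reverses orientation because $\tau$ does. Transporting $t_A|_A$ through $p|_U$ on $U$ and through $p|_{\tau(U)}$ on $\tau(U)$, and taking the identity elsewhere, defines $\tilde{t}\colon\tilde{F}\to\tilde{F}$ with $p\circ\tilde{t} = t_A\circ p$; as $p|_{\tau(U)}$ is orientation-reversing, $\tilde{t} = t_U\circ t_{\tau(U)}^{-1}$, where $t_V$ denotes the right-handed Dehn twist along the core of $V$ with respect to the fixed orientation of $\tilde{F}$, and the two commuting factors both fix $x_0^u$ and $x_0^d$. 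Consequently, for $x\in\pi$ with lift $\tilde{x}$ starting at $x_0^u$ (a loop at $x_0^u$ or a path $x_0^u\to x_0^d$), the lift of $t_A(x)$ starting at $x_0^u$ is $\tilde{t}(\tilde{x})$; hence $\widetilde{(1-t_A)^i(x)} = (1-\tilde{t})^i(\tilde{x})$ and, passing to the $I$-adic completion, $\widetilde{\log(t_A)(x)} = \log(\tilde{t})(\tilde{x})$.

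Next I would invoke the Kawazumi--Kuno formula on $\tilde{F}$ (with the sign convention matching the present statement), namely $\log(t_{\tilde{c}}) = \sigma\bigl(\tfrac12(\log\tilde{c})^2\bigr)$, valid for the action both on $\widehat{K\tilde\pi}$ and on the module of paths $x_0^u\to x_0^d$ (the representatives of $\tfrac12(\log\tilde{c})^2$ may be homotoped off a fixed such path, so the path case reduces to loops; the same $\sigma$ on paths is already used in Lemma \ref{lift}). The core of $U$ represents $c(r)$ and the core of $\tau(U)$ represents $\tau_*(c(r))$; moreover the element $\tfrac12(\log\tilde{c})^2\in\widehat{K\hat\pi}$ is built purely from the completed group ring, so it is carried by the homeomorphism $\tau$ exactly by $\tau_*$, giving $(\log\tau_*(c(r)))^2 = \tau_*\bigl(c((\log r)^2)\bigr)$. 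Therefore
\begin{equation*}
\log(\tilde{t}) = \sigma\bigl(\tfrac12 c((\log r)^2)\bigr) - \sigma\bigl(\tfrac12\tau_*(c((\log r)^2))\bigr) = \sigma\bigl(\theta(c((\log r)^2))\bigr) = \sigma(L),
\end{equation*}
the two halves assembling precisely into $\theta = \tfrac12(\id-\tau)$ --- this is where the orientation-reversal of $\tau$ gets absorbed. Since $L\in\theta\widehat{K\hat\pi}$ has $\theta(L)=L$, Lemma \ref{lift} yields, for every $x\in\pi$,
\begin{equation*}
\tilde{\sigma}(L)(x) = p\bigl(\sigma(\theta(L))(\tilde{x})\bigr) = p\bigl(\sigma(L)(\tilde{x})\bigr) = p\bigl(\log(\tilde{t})(\tilde{x})\bigr) = \log(t_A)(x),
\end{equation*}
and by continuity and $K$-linearity the two continuous derivations of $\widehat{K\pi}$ coincide. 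The reformulation $t_A = e^{\tilde{\sigma}(L)}$ is then immediate, since $1-t_A$ is topologically nilpotent on $\widehat{K\pi}$.

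I expect the main difficulty to lie in the first step: verifying carefully that the lift of the right-handed twist $t_A$ is $t_U\circ t_{\tau(U)}^{-1}$ --- the inverse on the second factor being exactly what makes the signs combine into the projector $\theta$ --- and that all the relevant facts from Kawazumi--Kuno (the Dehn twist formula, and the intertwining $t_A \leftrightarrow \tilde{t}$ under lifting) survive for the action on based paths $x_0^u\to x_0^d$, which is what is needed to treat the orientation-reversing elements of $\pi$. The remaining algebra is routine bookkeeping with $\theta$ and the completions.
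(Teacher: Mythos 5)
Your proposal is correct and follows essentially the same route as the paper: lift $t_A$ to the commuting product $t_{c(r)}\,t_{\tau(c(r))}^{-1}$ on $\tilde{F}$, split the logarithm, apply the Kawazumi--Kuno formula $\log t_{\tilde c}=\sigma\bigl(\tfrac12(\log\tilde c)^2\bigr)$ to each factor, assemble the two halves into $\theta$, and push down via Lemma \ref{lift}. Your write-up is in fact more careful than the paper's at the two points you flag (why the lift carries an inverse on the $\tau(U)$ factor, and the treatment of lifts that are paths $x_0^u\to x_0^d$), which the paper passes over silently.
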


\begin{proof}We use Theorem5.2.1 \cite{KK}.
Notice that the following diagram is commutative.

\begin{center}
\begin{tabular}{ccc}
$\tilde{F}$ & $\stackrel{t_{c(r)}t^{-1}_{\tau (c(r))}}{\to}$ & $\tilde{F}$ \\
$\downarrow$ & $\circlearrowright$ & $\downarrow$ \\
$F$ & $\stackrel{t_A}{\to}$ & $F$
\end{tabular}
\end{center}

Notice that $t_{c(r)}$ and $t_{\tau(c(r))}$ are commutative.
Let $\tilde{x}$ be the lift of $x \in \pi $ to $\tilde{F}$ and we have

\begin{equation}\notag
\begin{split}
\log (t_A) (x) &=p(\log(t_{c(r)}t^{-1}_{\tau(c(r))})(\tilde{x})) \\
&=p((\log (t_{c(r)})-\log (t_{\tau(c(r))}))(\tilde{x})) \\
&= p((\sigma(c(\frac{1}{2}(\log(r))^2))-\sigma(c(\frac{1}{2}(\log(\tau(r)))^2)))(\tilde{x})) \\
&=\tilde{\sigma}(\theta(c((\log(r))^2)))(x).
\end{split}
\end{equation}

Furthermore we have

\begin{equation*}
e^L(x) =e^{\log t_A}(x) =p(e^{\log(t_{c(r)}t^{-1}_{\tau(c(r))})}(\tilde{x})) 
=p(t_{c(r)}t^{-1}_{\tau(c(r)}(\tilde{x})) =t_A(x).
\end{equation*}

\end{proof}

\begin{rem}
For any embeding oriented annulus $A$ in the surface $F$, there exists $r \in \tilde{\pi}$ such that $t_A=e^{\tilde{\sigma}(\theta (c((\log(r))^2)))}:\widehat{K\pi} \to \widehat{K \pi}$.
\end{rem}


\begin{thebibliography}{9}

\bibitem{Bourbaki}
N. Bourbaki,
\textit{Groupes et algebres de Lie}, chapitre 2,
Hermann, Paris(1972).
\bibitem{Gadgil}
S. Gadgil,
\textit{The Goldman bracket characterizes homeomorphisms},
preprint, arXiv:1109.1395v2(2011).
\bibitem{Goldman}
W. M. Goldman,
\textit{Invariant functions on Lie groups and Hamiltonian ﬂows of surface
groups representations}, Invent. Math. 85, 263-302(1986).

\bibitem{Kawazumi}
N. Kawazumi and Y. Kuno,
\textit{The logarithms of Dehn twists},
preprint, arXiv: 1008.5017 (2010) to appear in: Quantum Topology.
\bibitem{KK}
N. Kawazumi and Y. Kuno,
\textit{Groupoid-theoretical methods in the mapping class groups of surfaces},
preprint, arXiv: 1109.6479 (2011).
\bibitem{Lickorish}
Lickorish W.B.R.,
\textit{Homeomorphisms of non-orientable two-manifolds},
Proc. Cambridge Philos.
Soc. 59, 307-317(1963).
\bibitem{MT}
G. Massuyeau and V. Turaev,
\textit{Fox pairings and generalized Dehn twists},
preprint, arXiv:1109.5248v3(2012) to appear in Ann. Inst. Fourier.
\bibitem{Turaev}
Turaev, V. G.,
\textit{Skein quantization of Poisson algebras of loops on surfaces},
 Ann. Sci. Ecole Norm. Sup. (4) 24  no. 6, 635-704(1991).

\end{thebibliography}
\end{document}